\newcommand{\heute}{1 October 2014}
\numberwithin{equation}{section}
\theoremstyle{plain}
\newtheorem{theorem}[equation]{Theorem}
\newtheorem{lemma}[equation]{Lemma}
\newtheorem{proposition}[equation]{Proposition}
\theoremstyle{remark}
\newtheorem*{defn}{Definition}
\newtheorem*{rk}{Remark}
\newcommand{\enref}[1]{\textup{(\ref{enum:#1})}}
\newcommand{\dashTwo}[1]{\textup{(\ref{two}${}'$)}}
\newcommand{\ignore}[1]{}
\newcommand{\f}[1][p]{\mathbb{F}_{#1}}
\newcommand{\Hom}{\operatorname{Hom}}
\newcommand{\Id}{\operatorname{Id}}
\newcommand{\eps}{\varepsilon}
\DeclareMathOperator{\Bild}{Im}
\DeclareMathOperator{\Ext}{Ext}
\DeclareMathSymbol\normal{\mathrel}{AMSa}{"43}
\newenvironment{textmatrix}{\left(\begin{smallmatrix}}{\end{smallmatrix}\right)}
\newcommand{\Gro}[1]{Gr{\"o}b\-ner}
\tikzset{vertex/.style={circle,draw,thick,fill,inner sep=0pt, minimum size = 2mm}}
\tikzset{myarr/.style={->,thick,shorten >=5pt, shorten <=5pt}}
\tikzset{badge/.style={circle,draw,very thick,inner sep=0.75mm}}
\begin{document}

\title[Ext algebra of Mathieu group]{The Ext algebra of the principal 2-block of the Mathieu group $M_{11}$}
\author[D. J. Green]{David J. Green}
\email{david.green@uni-jena.de}
\author[S. A. King]{Simon A. King}
\email{simon.king@uni-jena.de}
\thanks{King was supported by DFG grant GR 1585/6-1, which also gave Green travel assistance.}
\address{Institute for Mathematics \\
University of Jena \\ D-07737 Jena \\
Germany}
\subjclass[2000]{Primary 20J06; Secondary 16E30, 20C20, 20C34}
\date{\heute}

\begin{abstract}
\noindent
We complete the calculation of the Ext algebra of the principal 2-block of the Mathieu group $M_{11}$, extending work of Benson--Carlson and of Pawloski -- and duplicating work of Generalov~\cite{Generalov:M11}\@.
\end{abstract}

\maketitle

\section{Introduction}
\noindent
One major problem in modular representation theory is to understand in how many different ways simple modules may be assembled to form an indecomposable module. This is essentially a cohomological question.

To be more precise, recall from \cite[\S2.6]{Benson:I} that if $M,N$ are left modules over a unital ring $\Lambda$, then $\Ext^1_{\Lambda}(N,M)$ classifies extensions of the form
\[
0 \rightarrow M \rightarrow X \rightarrow N \rightarrow 0 \, ,
\]
and more generally $\Ext^r_{\Lambda}(N,M)$ has an interpretation in terms of longer exact sequences. Moreover there is a Yoneda product 
\[
\Ext^r_{\Lambda}(N, M) \times \Ext^s_{\Lambda}(L,N) \rightarrow \Ext^{r+s}_{\Lambda}(L,M)
\]
which corresponds to splicing two such exact sequences together. If $\Lambda$ has only finitely many simple modules, then this Yoneda product ensures that
\[
\bigoplus_{\text{$S,T$ simple $\Lambda$-modules}} \Ext^*_{\Lambda} (S,T)
\]
is a unital (noncommutative) ring, which is called the Ext algebra of~$\Lambda$. If $\Lambda$ is the modular group algebra $kG$ of a finite group (or its principal block), then the Ext algebra contains the cohomology ring $H^*(G,k) = \Ext^*_{kG}(k,k)$ as a graded commutative subalgebra. But in comparison to the better known cohomology ring with its emphasis on the trivial module, the Ext algebra encodes information about all simple modules and treats them as equals.  

Of course, the trivial module is the only simple module for the modular group algebra of a $p$-group, and so for $p$-groups the cohomology ring and the Ext algebra coincide. But leaving this case aside,
to date there have been exceptionally few complete calculations of Ext algebras. Carlson, Green\footnote{E. L. Green, not the first author.} and Schneider~\cite{CaGrSchn:Ext} described a procedure to calculate the Ext algebra of a $p$-block of a finite group. For computational purposes it can be helpful to replace a block by its basic algebra, see~\cite[I.2]{Erdmann:habil} and \cite[p.~319]{CaGrSchn:Ext}\@. Lux devised methods to compute basic algebras~\cite{Lux:habil}\@. His student Hoffman computed several basic algebras~\cite{HoffmanT:thesis,HoffmanT:website}, and Pawloski -- another student of Lux -- used these results to compute several Ext algebras in low degrees~\cite{PawloskiR:thesis}\@. In several of Pawloski's computations it would appear that all generators and relations have been found, but no attempt is made to prove this.

In this paper we obtain in Theorem~\ref{thm:M11p2b1} the full structure of the Ext algebra of the principal $2$-block of the Mathieu group $M_{11}$. Note that Benson and Carlson obtained extensive partial results using their diagrammatic methods~\cite[Sect.~12]{BensonCarlson:Diagrammatic}\@. Specifically, there are three simple modules: the trivial module $K$, and modules $M,N$. Benson and Carlson compute the ring $\Ext^*(S,S)$ for $S=K,M,N$ and observe that $\Ext^*(M,M)$ is noncommutative.

\section{The basic algebra}
\noindent
The starting point for our computation is Hoffman's description of the basic
algebra~\cite{HoffmanT:website} of the principal block of $M_{11}$\@.  We
follow follow the Benson--Carlson naming convention for the simple modules:
the trivial module $K$, together with $M$~and $N$. In the $2$-block $M$ has
degree $44$ and $N$ has degree $10$, but of course $K,M,N$ are all
$1$-dimensional in the basic algebra.

We wrote a GAP~\cite{GAP4} script to extract a presentation of the basic algebra, as this is only implicit in Hoffman's data.
The basic algebra of the principal block of $\bar{\mathbb{F}}_2M_{11}$ is given by the quiver
\[
\begin{tikzpicture}[every loop/.style={min distance=20mm}]
\node at (0,0) [badge] (A) {$K$};
\node at (-30mm,0) [badge] (B) {$N$};
\node at (30mm,0) [badge] (C) {$M$};
\draw[myarr] (B) to[out=30,in=150] node[auto] {$e$}  (A);
\draw[myarr] (A) to[out=30,in=150] node[auto] {$b$} (C);
\draw[myarr] (C) to[out=210,in=330] node[auto] {$c$} (A);
\draw[myarr] (A) to[out=210,in=330] node[auto] {$a$} (B);
\draw[myarr] (B) to[out=210,in=150,loop] node[auto] {$f$} ();
\draw[myarr] (C) to[out=30,in=330,loop] node[auto] {$d$} ();
\end{tikzpicture}
\]
and the reduced \Gro. basis of the relations ideal is
\begin{xalignat*}{4}
bd & = 0 & fe & = ebc & dc & = 0 & ea & = f^3 \\
af & = bca & cb & = 0 & d^2 & = caeb & aebc & = bcae \\
ebca & = f^4 & bcaeb & = 0 & f^5 & = 0
\end{xalignat*}
Here, shorter paths are more significant than longer ones, and paths of the same length are ordered lexicographically \emph{from the right}, with $a > b$. Note in particular that $aebc > bcae$. Finally, we may choose any ordering of the three vertices.
\medskip

\noindent
Since Morita equivalence leaves the Ext algebra unchanged we follow~\cite{CaGrSchn:Ext} and compute the Ext algebra of the principal block by computing the Ext algebra of the basic algebra.

\begin{theorem}
\label{thm:M11p2b1}
The Ext algebra of the principal block of $M_{11}$ at the prime $2$ has a
presentation with the following eight generators:
\begin{xalignat*}{4}
\alpha & \in \Ext^1(K,N)
&
\beta & \in \Ext^1(K,M)
&
\gamma & \in \Ext^1(M,K)
&
\delta & \in \Ext^1(M,M)
\\
\eps & \in \Ext^1(N,K)
&
\phi & \in \Ext^1(N,N)
&
\kappa & \in \Ext^4(K,K)
&
\nu & \in \Ext^4(N,N)
\end{xalignat*}
subject to the relations
\begin{xalignat*}{4}
\alpha \gamma & = 0
&
\gamma \beta & = 0
&
\eps \alpha & = 0
&
\beta \eps & = 0
\\
\phi^2 & = 0
&
\eps \phi \alpha & = 0
&
\phi \alpha \eps & = \alpha \eps \phi
&
\delta^2 \beta \gamma & = \beta \gamma \delta^2
\\
\gamma \delta^2 \beta & = 0
&
\kappa\gamma & = \gamma \delta^4
&
\delta^4 \beta & = \beta \kappa
&
\nu \alpha & = \alpha \kappa
\\
\kappa \eps & = \eps \nu
&
\nu \phi & = \phi \nu
\end{xalignat*}
These $14$ relations are minimal, and they form a \Gro. basis with
respect to the lexicographic (from the left) ordering $\alpha < \beta < \gamma
< \delta < \eps < \phi < \kappa < \nu$.
\end{theorem}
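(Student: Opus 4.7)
My plan is to follow the Carlson--Green--Schneider approach \cite{CaGrSchn:Ext} in three stages: compute the resolution, read off generators and relations, and prove completeness.

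First, I would compute a minimal projective resolution $P_\bullet \to S$ of the semisimple module $S = K \oplus M \oplus N$ over the basic algebra $\Lambda$, carried out to a sufficiently high degree using GAP and the presentation of $\Lambda$ given above. Since the resolution is minimal, the dimension of $\Ext^n_\Lambda(T, T')$ equals the multiplicity of $T'$ in the top of the $n$-th projective cover of $T$. Tabulating these multiplicities degree by degree provides the Hilbert series of the bigraded Ext algebra $E := \bigoplus_{T,T',n} \Ext^n_\Lambda(T, T')$. From the first radical layers of the $\Omega T$ I read off degree-$1$ generators; matching the known restriction results of Benson--Carlson for $\Ext^*(S,S)$ when $S = K, M, N$ would let me identify the degree-$4$ generators $\kappa$ and $\nu$, which one expects to arise from the cohomology of the Sylow $2$-subgroup.

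Second, I would compute Yoneda products of the candidate generators by lifting each element of $\Ext^1$ to an explicit chain endomorphism of $P_\bullet$ and composing. Each of the fourteen proposed relations then becomes a finite calculation: verify that the corresponding lift is null-homotopic through the relevant degree. The antisymmetric-looking relations such as $\aebc - \bcae$ and the commutation relations involving $\kappa$, $\nu$, $\delta^4$ are precisely those forced by the Gröbner basis of $\Lambda$ (observe how $aebc = bcae$ in $\Lambda$ translates into the corresponding Ext-relation). For each tentative relation the task is purely mechanical, so the actual effort lies in making sure no others are missed in the low degrees where generators could arise.

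Third, the main obstacle is proving \emph{completeness}: that the list of generators and relations is exhaustive in all degrees. The strategy is a Hilbert-series match. Let $A$ be the free path algebra on the $8$ generators modulo the $14$ proposed relations. The stated Gröbner basis property, once verified by reducing all $S$-polynomials to zero (a finite check, since the ordering is compatible with the degree filtration and $\phi^2, \kappa \gamma, \delta^4 \beta, \nu \alpha, \kappa \eps, \nu \phi, \gamma \delta^2 \beta$ together bound the normal forms), gives a $k$-basis of normal monomials in $A$ and hence the full Hilbert series of $A$. One then shows that this Hilbert series agrees with that of $E$ computed from the resolution, in every bidegree. Since the natural surjection $A \twoheadrightarrow E$ sends a $k$-basis to a spanning set and the Hilbert series coincide, it is an isomorphism. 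Minimality of the fourteen relations follows by inspecting leading terms: each has a leading monomial not divisible by the leading monomial of any other, so removing any one would strictly enlarge the set of normal monomials and destroy the Hilbert-series match.

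The delicate point, as always in such computations, is choosing a truncation degree for the resolution that is provably large enough to detect all generators and relations. I would argue this by exhibiting a finite-dimensional quotient structure (the Noetherian cohomology subring $k[\kappa, \nu]$-module structure on $E$ in low degree) which ensures that beyond the maximal degree of the listed relations, everything is already a consequence of lower-degree data.
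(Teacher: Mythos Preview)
Your three-stage plan---compute a minimal resolution, verify generators and relations via explicit chain-map lifts, and establish completeness by a Hilbert-series match together with a Gr\"obner-basis check---is exactly the paper's strategy. The substantive difference lies in stage one. Rather than truncate the resolution and then argue that the truncation suffices, the paper writes down the \emph{entire} minimal resolutions in closed form: the resolution of $N$ is periodic of period four, and those of $M$ and $K$ are total complexes of explicit second-quadrant double complexes with visible self-similarity, exactness being proved by hand-built contracting homotopies. Consequently the Hilbert--Poincar\'e series of $\Ext^*(L,\_)$ for $L=K,M,N$ are known exactly as rational functions, and the comparison with the normal-monomial count is an equality of closed-form series rather than a degree-by-degree check up to a bound. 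Your proposed Noetherian $k[\kappa,\nu]$-module argument could in principle be made rigorous, but it would itself require enough control over the resolutions to verify finite generation and the needed commutation relations; the paper's explicit resolutions render any such truncation argument unnecessary.

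Your minimality argument has a genuine gap. Knowing that no leading term divides another only tells you the Gr\"obner basis is \emph{reduced}; it does not show that each relation is indispensable as an ideal generator. If you delete one relation and count normal monomials for the remaining thirteen, you are \emph{not} computing the Hilbert series of the ideal they generate, because those thirteen need not themselves be a Gr\"obner basis for that smaller ideal---running Buchberger might still recover the deleted relation, in which case the quotient (and its Hilbert series) is unchanged. The paper closes this gap by checking directly that after deleting any one relation, the $S$-polynomials among the remaining thirteen either vanish identically or reduce to zero using only relations of strictly lower degree than the deleted one, and that no relation of degree $\geq 6$ is available to be deleted.
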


\begin{rk}
  By contrast, the cohomology ring $H^*(M_{11},\bar{\mathbb{F}}_2) \cong \Ext^*(K,K)$ has
  three generators and only one relation. But the generators are in degrees
  $3$, $4$ and $5$; and the relation is in degree ten.  The Ext Algebra has 8
  generators and 14 relations, but the last relation is in degree five.

The generators of $\Ext^*(K,K)$ are $\gamma \delta \beta$, $\kappa$ and $\gamma \delta^3 \beta$.
\end{rk}

\begin{rk}
As observed by Benson and Carlson, $\Ext^*(M,M)$ is noncommutative: for $\delta \cdot \beta \gamma \neq \beta \gamma \cdot \delta$.
\end{rk}

\begin{proof}
Proposition~\ref{prop:ExtM11-gens} constructs the generators and proves that they generate. Proposition~\ref{prop:ExtM11-rels} handles the relations.
\end{proof}

\subsection*{Projective indecomposable modules}
The projective indecomposable (right) module (PIM) for a vertex is the vector
space spanned by all paths starting at that vertex. The three PIMs and their
bases of standard monomials are:
\begin{equation}
\label{eqn:stdMons}
\begin{array}{l|l}
P_K & 1_K, a, b, ae, bc, aeb, bca, bcae \\
\hline
P_M & 1_M, c, d, ca, cae, caeb \\
\hline
P_N & 1_N, e, f, eb, f^2, ebc, f^3, f^4
\end{array}
\end{equation}
So in particular $P_K$ is spanned by the vertex path, together with all words
beginning with $a$ or $b$.

If $I_1, I_2,\ldots,I_s$ are vertices with projective indecomposables $P_{I_1}, P_{I_2}, \ldots, P_{I_s}$, then we abbreviate $P_{I_1}\oplus P_{I_2}\oplus \cdots \oplus P_{I_s}$ to
$P_{I_1 I_2\ldots I_s}$.  We write maps in matrix
notation. Hence, if $I_1, I_2,\ldots,I_m$ and $J_1, J_2,\ldots,J_n$ are vertices and $x_{i,j}$ are paths starting at $J_i$ and ending at
$I_j$, we define $P_{I_1I_2\ldots I_m}\xrightarrow{\begin{textmatrix}
    x_{i,j} \end{textmatrix}} P_{J_1J_2 \ldots J_n}$ by
\[
\begin{textmatrix} w_1\\\vdots\\ w_m\end{textmatrix} \mapsto
\begin{textmatrix} x_{1,1}w_1+\cdots + x_{1,m}w_m\\
  \vdots\\
  x_{n,1}w_1+\cdots + x_{n,m}w_m\end{textmatrix}
\]

\section{The minimal resolutions}
The minimal resolution of~$N$ is periodic of period four:
\[
\begin{CD}
\cdots @>>> P_{KN} @>{\begin{textmatrix}e&f\end{textmatrix}}>> P_N @>{f^4}>> P_N @>{\begin{textmatrix}a\\f\end{textmatrix}}>> P_{KN} @>{\begin{textmatrix}bc&a\\e&f^2\end{textmatrix}}>> P_{KN} @>{\begin{textmatrix}e&f\end{textmatrix}}>> P_N
\end{CD}
\]

\noindent
The minimal resolution of~$M$ is the total complex of the double complex which starts as follows and occupies the entire second quadrant:

\begin{small}
\begin{equation}
\label{eqn:minresM}
\begin{CD}
&&&&&&&&&&& \vdots \\
&&&&&&&&&&\cdots\quad& P_M \\
&&&&&&&&&\vdots&& @VV{b}V \\
&&&&&&&&\cdots\quad& P_M @>{aeb}>> P_K \\
&&&&&&&\vdots&& @VV{d}V @VV{c}V \\
&&&&&&\cdots\quad& P_K @>{cae}>> P_M @>{d}>> P_M \\
&&&&&\vdots&& @VV{c}V @VV{b}V @VV{d}V \\
&&&&\cdots\quad& P_M @>{d}>> P_M @>{aeb}>> P_K @>{cae}>> P_M \\
&&&\vdots&& @VV{aeb}V @VV{d}V @VV{c}V @VV{b}V \\
&&\cdots\quad& P_M @>{b}>> P_K @>{c}>> P_M @>{d}>> P_M @>{aeb}>> P_K \\
&\vdots&& @VV{d}V @VV{cae}V @VV{aeb}V @VV{d}V @VV{c}V \\
\cdots\quad & P_K @>{c}>> P_M @>{d}>> P_M @>{b}>> P_K @>{c}>> P_M @>{d}>> P_M
\end{CD}
\end{equation}
\end{small}

\noindent
Compare this with the resolution over the group algebra in~\cite[p.~106]{BensonCarlson:Diagrammatic}\@.
The minimal resolution of~$K$ starts as follows, compare \cite[p.~267]{BensonCarlson:multiple}:

\begin{small}
\begin{equation}
\label{eqn:minresK}
\begin{CD}
\vdots && \vdots && \vdots && \vdots && \vdots && \vdots \\
P_N @>{a}>> P_K @>{cae}>> P_M @>{d}>> P_M @>{aeb}>> P_K @>{cae}>> P_M
\\ &&&& @VV{eb}V @VV{d}V @VV{c}V @VV{b}V
\\ &&&& P_N @>{ca}>> P_M @>{d}>> P_M @>{aeb}>> P_K
\\ &&&& @VV{f}V @VV{b}V @VV{d}V @VV{c}V
\\ &&&& P_N @>{a}>> P_K @>{cae}>> P_M @>{d}>> P_M
\\ &&&&&&&& @VV{eb}V @VV{d}V
\\ &&&&&&&& P_N @>{ca}>> P_M
\\ &&&&&&&& @VV{f}V @VV{b}V
\\ &&&&&&&& P_N @>{a}>> P_K
\end{CD}
\end{equation}
\end{small}
\par
\noindent
In particular, for all $r \geq 0$ there is a copy of $P_N$ at position $(2r+1,2r)$, and nothing at position $(2r+2,2r+1)$.
\medskip

\noindent
Minimality in both cases is immediate, as all maps are zero modulo
radical.\footnote{The radical is spanned by paths of lengths at least one.} We
demonstrate that they are resolutions by constructing contracting homotopies.

\begin{defn}
  If $P \xrightarrow{x} Q$ is one of $P_M \xrightarrow{b} P_K$, $P_K
  \xrightarrow{c} P_M$, $P_M \xrightarrow{d} P_M$, $P_M \xrightarrow{eb} P_N$
  and $P_N \xrightarrow{f} P_N$, we define $Q \xrightarrow{h_x} P$ to be the
  vector space homomorphism given on the standard monomials $w$~in $Q$ by
\[
h_x(w) = \begin{cases} v & \text{$v$ a standard monomial and $w = xv$} \\ 0 & \text{otherwise} \end{cases} \, .
\]
Here $w = xv$ means equality in the basic algebra; as words they need not
coincide. For example, $h_d(caeb) = d$, since $caeb = d^2$.

Note that the map $h_x$ is well defined. This can be verified by inspecting
the Gr\"obner relations. These relations can be interpreted as replacement
rules, where the left hand side of each relation is replaced by the right hand
side. We see that there is no rule that would replace a word starting with
$b$, $c$, $d$ or $eb$ by a word that starts with a different sub-word.

The values of $h_x(w)$ can be read off of the following table.
\begin{equation}
\label{eqn:hx}
\begin{tabular}{c|l|l}
$x$ & $w = xv$ & $w$ with $h_x(w) = 0$ \\
\hline
$b$ & $b$, $bc$, $bca$, $bcae$ & $1_K$, $a$, $ae$, $aeb$ \\
$c$ & $c$, $ca$, $cae$, $caeb$ & $1_M$, $d$ \\
$d$ &  $d$, $caeb = d^2$ & $1_M$, $c$, $ca$, $cae$ \\
$eb$ & $eb$, $ebc$, $f^4 = ebca$ & $1_N$, $e$, $f$, $f^2$, $f^3$ \\
$f$ & $f$, $ebc = fe$, $f^2$, $f^3$, $f^4$ & $1_N$, $e$, $eb$ \\
\hline
\end{tabular}
\end{equation}
\end{defn}

\begin{lemma}
\label{lemma:bcd}
If $P \xrightarrow{x} Q \xrightarrow{y} R$ is one of $P_M \xrightarrow{b} P_K \xrightarrow{c} P_M$, $P_K \xrightarrow{c} P_M \xrightarrow{d} P_M$, $P_M \xrightarrow{d} P_M \xrightarrow{b} P_K$ and $P_M \xrightarrow{eb} P_N \xrightarrow{f} P_N$ then
\[
x \circ h_x + h_y \circ y = \Id_Q \, .
\]
Hence
$P_M \xrightarrow{eb} P_N \xrightarrow{f} P_N$ and the $3$-periodic sequence
\[
\begin{CD}
\cdots @>{c}>> P_M @>{d}>> P_M @>{b}>> P_K @>{c}>> P_M @>{d}>> P_M @>{b}>> \cdots
\end{CD}
\]
are exact.
\end{lemma}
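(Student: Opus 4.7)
The plan is to verify the homotopy identity $x \circ h_x + h_y \circ y = \Id_Q$ directly on the standard monomials of $Q$, and then combine it with $y \circ x = 0$ to deduce exactness. The relation $y \circ x = 0$ is immediate from the \Gro. basis: in the three cases drawn from the $3$-periodic sequence it is one of the relations $cb = 0$, $dc = 0$, $bd = 0$; and for $P_M \xrightarrow{eb} P_N \xrightarrow{f} P_N$ we compute $f \cdot eb = (fe)b = ebc \cdot b = 0$ using $fe = ebc$ followed by $cb = 0$.

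To establish $x h_x + h_y y = \Id_Q$, I split the standard monomials $w \in Q$ listed in \eqref{eqn:stdMons} into two classes. If $w = x v$ for some standard monomial $v \in P$, then $h_x(w) = v$ by definition, so $x h_x(w) = w$; and $y(w) = (y x)v = 0$ forces $h_y y(w) = 0$. If instead $h_x(w) = 0$, the identity reduces to $h_y(y w) = w$, and one reads off from table~\eqref{eqn:hx} that left multiplication by $y$ carries each such $w$ to a monomial whose $h_y$-value is precisely $w$. For instance, in the case $P_M \xrightarrow{b} P_K \xrightarrow{c} P_M$, the standard monomials of $P_K$ not beginning with $b$ are $1_K, a, ae, aeb$, and left multiplication by $c$ sends them to $c, ca, cae, caeb$, each of which is carried back to its preimage by $h_c$. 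The other three cases are entirely analogous; the only reductions that need to be kept in mind are the identification $d \cdot d = caeb$ (used when $Q = P_M$, both in the $c$--$d$ and the $d$--$b$ composition) and the computation $f \cdot eb = ebcb = 0$ already appearing above.

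Once the homotopy identity holds, exactness at $Q$ is automatic: any $w \in \ker y$ satisfies $w = x h_x(w) + h_y y(w) = x h_x(w) \in \Bild x$, so combined with $y x = 0$ we obtain $\ker y = \Bild x$. Applied to each of the three compositions in the $3$-periodic sequence this yields exactness at every term; applied to the $eb$--$f$ composition it gives exactness at $P_N$. I expect no real conceptual obstacle: the whole argument is a finite case analysis on at most eight monomials per case, and once table~\eqref{eqn:hx} is accepted the verification is completely mechanical.
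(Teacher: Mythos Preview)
Your proof is correct and follows essentially the same approach as the paper's: both verify the homotopy identity by inspection on the standard monomials listed in~\eqref{eqn:hx} and then deduce exactness from $yx=0$. Your two-class split (monomials of the form $xv$ versus those with $h_x(w)=0$) makes the inspection cleaner than the paper's bare ``by inspection,'' but it is the same argument.
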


\begin{proof}
By inspection in Table~\eqref{eqn:hx}\@. For example, the $v$ with $h_{eb}(v) = 0$ are precisely the $v$ such that $fv$ appears in the ``$w = xv$'' column for $x=f$. Exactness since in addition $feb = ebcb = 0$, and $cb = dc = bd = 0$.
\end{proof}

\noindent
Both double complexes are built from the following commutative squares and
their transposes. Note that these squares all correspond to relations in the
basic algebra.
\[
\begin{array}{c@{\qquad}c@{\qquad}c}
\begin{tikzpicture}[x=1cm,y=1cm]
  \matrix [matrix of math nodes,column sep=2mm,row sep=2mm]
  {
  |(A)| P_M & & |(B)| P_K \\ & |(X)| A & \\ |(C)| P_M & & |(D)| P_M \\
};
\begin{scope}[every node/.style={midway,auto,font=\scriptsize}]
\foreach \s/\t/\u in {A/B/aeb,B/D/c} \draw[->] (\s) -- node {$\u$} (\t); 
\foreach \s/\t/\u in {A/C/d,C/D/d} \draw[->] (\s) -- node[swap] {$\u$} (\t); 
\end{scope}
\end{tikzpicture}
&
\begin{tikzpicture}[x=1cm,y=1cm]
  \matrix [matrix of math nodes,column sep=2mm,row sep=2mm]
  {
  |(A)| P_K & & |(B)| P_M \\ & |(X)| B & \\ |(C)| P_M & & |(D)| P_K \\
};
\begin{scope}[every node/.style={midway,auto,font=\scriptsize}]
\foreach \s/\t/\u in {A/B/cae,B/D/b} \draw[->] (\s) -- node {$\u$} (\t); 
\foreach \s/\t/\u in {A/C/c,C/D/aeb} \draw[->] (\s) -- node[swap] {$\u$} (\t); 
\end{scope}
\end{tikzpicture}
&
\begin{tikzpicture}[x=1cm,y=1cm]
  \matrix [matrix of math nodes,column sep=2mm,row sep=2mm]
  {
  |(A)| P_M & & |(B)| P_M \\ & |(X)| C & \\ |(C)| P_K & & |(D)| P_M \\
};
\begin{scope}[every node/.style={midway,auto,font=\scriptsize}]
\foreach \s/\t/\u in {A/B/d,B/D/d} \draw[->] (\s) -- node {$\u$} (\t); 
\foreach \s/\t/\u in {A/C/b,C/D/cae} \draw[->] (\s) -- node[swap] {$\u$} (\t); 
\end{scope}
\end{tikzpicture}
\\
\begin{tikzpicture}[x=1cm,y=1cm]
  \matrix [matrix of math nodes,column sep=2mm,row sep=2mm]
  {
  |(A)| P_N & & |(B)| P_M \\ & |(X)| D & \\ |(C)| P_N & & |(D)| P_K \\
};
\begin{scope}[every node/.style={midway,auto,font=\scriptsize}]
\foreach \s/\t/\u in {A/B/ca,B/D/b} \draw[->] (\s) -- node {$\u$} (\t); 
\foreach \s/\t/\u in {A/C/f,C/D/a} \draw[->] (\s) -- node[swap] {$\u$} (\t); 
\end{scope}
\end{tikzpicture}
&
\begin{tikzpicture}[x=1cm,y=1cm]
  \matrix [matrix of math nodes,column sep=2mm,row sep=2mm]
  {
  |(A)| P_M & & |(B)| P_M \\ & |(X)| E & \\ |(C)| P_N & & |(D)| P_M \\
};
\begin{scope}[every node/.style={midway,auto,font=\scriptsize}]
\foreach \s/\t/\u in {A/B/d,B/D/d} \draw[->] (\s) -- node {$\u$} (\t); 
\foreach \s/\t/\u in {A/C/eb,C/D/ca} \draw[->] (\s) -- node[swap] {$\u$} (\t); 
\end{scope}
\end{tikzpicture}
\end{array}
\]

\begin{lemma}
\label{Y-OK}
$\begin{array}{c}
\begin{tikzpicture}[x=1cm,y=1cm]
  \matrix [matrix of math nodes,column sep=2mm,row sep=2mm]
  {
  |(A)| R & & |(B)| P \\ & |(X)| X & \\ |(C)| S & & |(D)| Q \\
};
\begin{scope}[every node/.style={midway,auto,font=\scriptsize}]
\draw[->] (A) -- node {$\alpha$} (B.west|-A); 
\draw[->] (A) -- node[swap] {$y$} (C.north-|A); 
\draw[->] (B) -- node {$x$} (D.north-|B); 
\draw[->] (C) -- node[swap] {$\beta$} (D.west|-C);
\end{scope}
\end{tikzpicture}\end{array}
$ satisfies $\alpha h_y = h_x \beta$ for $X = A, B, C, D, E$.
\end{lemma}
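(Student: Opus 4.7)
The plan is to verify the identity $\alpha h_y = h_x \beta$ of linear maps $S \to P$ separately for each of the five squares $X \in \{A,B,C,D,E\}$, by evaluating both sides on every standard monomial of $S$ as listed in~\eqref{eqn:stdMons}. Since both sides are $k$-linear this is a finite check, and the commutativity of the square---which is exactly the relation $x\alpha = \beta y$ in the basic algebra, drawn directly from the \Gro. basis---is what should make the two sides match.

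For a fixed standard monomial $w$ of $S$, I compute the left-hand side by first reading $h_y(w)$ off Table~\eqref{eqn:hx}, obtaining either zero or the unique standard monomial $v \in R$ with $w = yv$; I then multiply by $\alpha$ and reduce in the basic algebra. For the right-hand side I form $\beta w$, reduce it to standard form using the \Gro. basis of Section~2, and apply $h_x$ via Table~\eqref{eqn:hx}.

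As a representative example, for Square~$A$ one must check $aeb \cdot h_d(w) = h_c(d\, w)$ for $w \in \{1_M, c, d, ca, cae, caeb\}$. The case $w = d$ gives $aeb \cdot 1_M = aeb$ on the left, and $d^2 = caeb$ reduces so that $h_c(caeb) = aeb$ on the right. The case $w = caeb = d^2$ illustrates the role of the \Gro. reductions: the left side is $aeb \cdot d = 0$ (since $bd = 0$), while on the right $d \cdot caeb = d^3 = cae \cdot bd = 0$, and $h_c(0) = 0$. All other standard monomials are killed by either $y$ on the left or $\beta$ on the right. Squares $B$--$E$ are handled in exactly the same style, with the commuting square in each case corresponding to a \Gro. relation that exhibits the required cancellations.

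The only real obstacle is clerical: one must consistently keep track of reductions such as $bd = dc = cb = 0$, $d^2 = caeb$, $f^4 = ebca$, $aebc = bcae$, and $ea = f^3$ before consulting Table~\eqref{eqn:hx}. Since each source module has at most eight standard monomials and most of them are annihilated immediately by either $y$ or $\beta$, each square boils down to a handful of nontrivial cases, and no conceptual input beyond Lemma~\ref{lemma:bcd} and the \Gro. basis is needed.
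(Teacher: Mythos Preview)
Your proposal is correct and is essentially the same verification the paper does: check $\alpha h_y(w) = h_x(\beta w)$ on the standard monomials of $S$ using Table~\eqref{eqn:hx} and the \Gro{} relations. The paper simply compresses the whole check into one sentence, observing that for every square the monomial $w=y$ is sent to $\alpha$ by both sides, that for $X=D$ there is one further nonzero case $w=ebc=fe\mapsto cae$, and that all remaining standard monomials are killed by both sides; your worked example for Square~$A$ is exactly an instance of this, and your final remark that most monomials die immediately is precisely the paper's ``kill all remaining monomials'' step. (One small quibble: Lemma~\ref{lemma:bcd} is not actually used here---only Table~\eqref{eqn:hx} and the \Gro{} basis are needed.)
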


\begin{proof}
Both maps send $y$ to $\alpha$, and for $X=D$ they both map $fe = ebc$ to $cae$. By~\eqref{eqn:hx} they kill all remaining monomials.
\end{proof}

\begin{defn}
In $\begin{array}{c}
\begin{tikzpicture}[x=1cm,y=1cm]
  \matrix [matrix of math nodes,column sep=2mm,row sep=2mm]
  {
  |(A)| R & & |(B)| P \\ & |(X)| X & \\ |(C)| S & & |(D)| Q \\
};
\begin{scope}[every node/.style={midway,auto,font=\scriptsize}]
\draw[->] (A) -- node {$\alpha$} (B.west|-A); 
\draw[->] (A) -- node[swap] {$y$} (C.north-|A); 
\draw[->] (B) -- node {$x$} (D.north-|B); 
\draw[->] (C) -- node[swap] {$\beta$} (D.west|-C);
\end{scope}
\end{tikzpicture}\end{array}
$ for $X = A, C, D$ define $h'_X \colon Q \rightarrow S$ to be the linear map given on standard monomials by
\[
h'_X(w) = \begin{cases} v & \text{$w = \beta v$ and $w \notin \Bild(x)$} \\ 0 & \text{otherwise} \end{cases} \, .
\]
Again, $w = \beta v$ means equality in the basic algebra. The table lists all cases.
\begin{equation}
\label{eqn:hdash}
\begin{tabular}{c|c|c|c|c|l|l}
$X$ & $\beta$ & $Q$ & $S$ & $x$ & $w = \beta v \notin \Bild(x)$ & $w$ with $h'_X(w) = 0$ \\
\hline
$A$ & $d$ & $P_M$ & $P_M$ & $c$ & $d$ & $1_M$, $c$, $ca$, $cae$, $caeb$ \\
$C$ & $cae$ & $P_M$ & $P_K$ & $d$ & $cae$ & $1_M$, $c$, $d$, $ca$, $caeb = d^2$ \\
$D$ & $a$ & $P_K$ & $P_N$ & $b$ &  $a$, $ae$, $aeb$ & $1_K$, $b$, $bc$, $bca$, $bcae$ \\
\hline
\end{tabular}
\end{equation}
\end{defn}

\begin{lemma}
\label{lemma:hdash}
Let $\begin{array}{c}
\begin{tikzpicture}[x=1cm,y=1cm]
  \matrix [matrix of math nodes,column sep=2mm,row sep=2mm]
  {
  |(A)| R & & |(B)| P \\ & |(X)| X & \\ |(C)| S & & |(D)| Q \\
};
\begin{scope}[every node/.style={midway,auto,font=\scriptsize}]
\draw[->] (A) -- node {$\alpha$} (B.west|-A); 
\draw[->] (A) -- node[swap] {$y$} (C.north-|A); 
\draw[->] (B) -- node {$x$} (D.north-|B); 
\draw[->] (C) -- node[swap] {$\beta$} (D.west|-C);
\end{scope}
\end{tikzpicture}\end{array}
$be one of $A, C, D$. Then
\begin{enumerate}
\item
\label{enum:hdash-1}
$h'_X \circ x = 0$.
\item
\label{enum:hdash-2}
$(\beta h'_X + x h_x)(w) = \begin{cases} w & w \in \Bild(x) + \Bild(\beta) \\ 0 & \text{otherwise} \end{cases}$
for  $w \in Q$ standard monomial.
\end{enumerate}
\end{lemma}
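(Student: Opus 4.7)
The plan is to carry out a direct verification using Tables \eqref{eqn:hx} and \eqref{eqn:hdash}, splitting into the three cases $X \in \{A, C, D\}$ and checking each standard monomial of $Q$. Both $h_x$ and $h'_X$ are defined on standard monomials, and the tables list exhaustively on which monomials they are nonzero, so in principle everything reduces to bookkeeping.

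For part \enref{hdash-1}, I would exploit the fact that by construction $h'_X(w) = 0$ whenever $w \in \Bild(x)$. It therefore suffices to show that for each standard monomial $w$ in $P$, the element $x(w) = xw$ (reduced by the \Gro. rules) is either zero or a standard monomial that appears in the ``$w = xv$'' column of Table \eqref{eqn:hx} for the given $x$; that column is exactly the list of standard monomials in $\Bild(x)$. This is a finite check in each case: for $X = A$ one runs $c$ through the eight standard monomials of $P_K$, for $X = C$ one runs $d$ through the six standard monomials of $P_M$, and for $X = D$ one runs $b$ through the six standard monomials of $P_M$. In every instance the product collapses, via the \Gro. basis, into the stated list, so $h'_X \circ x = 0$.

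For part \enref{hdash-2}, I would partition the standard monomials $w \in Q$ into three disjoint classes and handle them separately. If $w \in \Bild(x)$, then by the tables $w = xv$ for a unique standard monomial $v$, giving $h_x(w) = v$ and (by definition) $h'_X(w) = 0$, so $(\beta h'_X + x h_x)(w) = xv = w$. If $w \in \Bild(\beta) \setminus \Bild(x)$, then Table \eqref{eqn:hdash} yields $h'_X(w) = v$ with $w = \beta v$, while $h_x(w) = 0$ because $w$ does not appear in the second column of Table \eqref{eqn:hx} for $x$, and again the sum equals $w$. Finally, if $w$ lies in neither image then both $h_x$ and $h'_X$ vanish on $w$, so the sum is~$0$.

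The only place where a genuine obstacle could hide is the implicit claim that the union of standard monomials in $\Bild(x) \cup \Bild(\beta)$ accounts for all standard monomials in the subspace $\Bild(x) + \Bild(\beta)$; a priori a single standard monomial could equal a non-trivial sum $xu + \beta v$ without lying in either image separately. I would dispatch this by inspection in each of the three squares: the standard monomials of $\Bild(c), \Bild(d), \Bild(b)$ and of $\Bild(d), \Bild(cae), \Bild(a)$ listed in the two tables are such that no standard monomial of $Q$ outside their union can be obtained as a linear combination of them. Concretely, for $X = A$ the subspace $\Bild(c) + \Bild(d)$ is spanned by $\{c, ca, cae, caeb, d\}$ and is complemented by~$\langle 1_M \rangle$; analogous explicit descriptions in the $C$ and $D$ squares finish the proof.
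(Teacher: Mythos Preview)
Your proposal is correct and is precisely the verification that the paper's one-line proof (``Immediate from the construction'') leaves to the reader: you unpack the definitions of $h_x$ and $h'_X$ on standard monomials via Tables~\eqref{eqn:hx} and~\eqref{eqn:hdash} and check each case explicitly. Your final paragraph, confirming that no standard monomial outside the listed ones lies in $\Bild(x)+\Bild(\beta)$, is the only point requiring any care, and your treatment of it is sound.
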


\begin{proof}
Immediate from the construction.
\end{proof}

\begin{proposition}
\label{prop:minres}
\eqref{eqn:minresM} is a minimal resolution of~$M$ with contracting homotopy
\begin{small}
\begin{equation}
\label{eqn:contractM}
\begin{CD}
&&&&&&&&&&& \vdots \\
&&&&&&&&&&& P_M \\
&&&&&&&&&\vdots&& @A{h_b}AA \\
&&&&&&&&& P_M && P_K \\
&&&&&&&\vdots&& @A{h_d}AA @A{h_c}AA \\
&&&&&&& P_K && P_M && P_M \\
&&&&&\vdots&& @A{h_c}AA @A{h_b}AA @A{h_d}AA \\
&&&&\cdots\quad& P_M @<{h'_A}<< P_M && P_K && P_M \\
&&&&&&& @A{h'_A}AA @A{h_c}AA @A{h_b}AA \\
&&\cdots\quad& P_M @<{h_b}<< P_K @<{h_c}<< P_M @<{h'_A}<< P_M && P_K \\
&&& && && && @A{h'_A}AA @A{h_c}AA \\
\cdots\quad & P_K @<{h_c}<< P_M @<{h_d}<< P_M @<{h_b}<< P_K @<{h_c}<< P_M @<{h'_A}<< P_M
\end{CD}
\end{equation}
\end{small}
\par \noindent
In particular, the zig-zag pattern at positions $(r,r)$ and $(r+1,r)$ continues indefinitely. Also, \eqref{eqn:minresK} is a minimal resolution of~$K$ with contracting homotopy
\begin{small}
\begin{equation}
\label{eqn:contractK}
\begin{CD}
\vdots && \vdots && \vdots && \vdots && \vdots && \vdots \\
P_N @<{h'_D}<< P_K @<{h'_C}<< P_M && P_M && P_K && P_M
\\ &&&& @A{h_{eb}}AA @A{h_d}AA @A{h_c}AA @A{h_b}AA
\\ &&&& P_N && P_M && P_M && P_K
\\ &&&& @A{h_f}AA @A{h_b}AA @A{h_d}AA @A{h_c}AA
\\ &&&& P_N @<{h'_D}<< P_K @<{h'_C}<< P_M && P_M
\\ &&&&&&&& @A{h_{eb}}AA @A{h_d}AA
\\ &&&&&&&& P_N && P_M
\\ &&&&&&&& @A{h_f}AA @A{h_b}AA
\\ &&&&&&&& P_N @<{h'_D}<< P_K
\end{CD}
\end{equation}
\end{small}
\par\noindent
As in \eqref{eqn:minresK}, there is a $P_N$ at each $(2r+1,2r)$ and nothing at each $(2r+2,2r+1)$.
\end{proposition}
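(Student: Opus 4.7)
The plan is to verify both assertions in three stages: check that the given diagrams are chain complexes, verify the contracting homotopy module by module, and observe minimality. For the chain complex property, both \eqref{eqn:minresM} and \eqref{eqn:minresK} are total complexes of second-quadrant double complexes built from the commutative squares $A$--$E$ and their transposes; since each such square corresponds to a basic-algebra relation and we are working in characteristic two, the total differential squares to zero without any sign concerns. For \eqref{eqn:minresK} one also needs the extra $f$-columns attaching the one-row $P_N$-tail to the rest of the double complex, and these are pieces of the periodic complexes handled by Lemma~\ref{lemma:bcd}.

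The heart of the proof is verifying the contracting homotopy. At each module $Q$ in the total complex, the identity $\partial h + h\partial = \Id_Q$ decomposes into \emph{straight} summands $x h_x + h_y y$, which lie entirely within one row or column of the double complex, and \emph{cross} summands $\alpha h_y + h_x \beta$ coming from the diagonals of the squares. The cross summands vanish by Lemma~\ref{Y-OK}, again thanks to characteristic two. Each straight summand whose two $h$-legs are both genuine $h_x$-maps contributes $\Id_Q$ by Lemma~\ref{lemma:bcd}. The remaining case, where one leg at $Q$ is an $h'_X$, is the delicate one: the second clause of Lemma~\ref{lemma:hdash} says that $\beta h'_X + x h_x$ is only the projection onto $\Bild(x) + \Bild(\beta)$, while the first clause ensures the cross-term cancellation still goes through in the presence of the $h'_X$-leg. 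At each such staircase-boundary module one then checks, using the tables \eqref{eqn:hx} and \eqref{eqn:hdash}, that the complementary straight summand from the transverse axis supplies exactly those standard monomials not already in $\Bild(x) + \Bild(\beta)$, so that the total is again $\Id_Q$.

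For the rightmost column (degree zero) one verifies directly from the standard-monomial bases \eqref{eqn:stdMons} that the image of the first differential coincides with the kernel of the augmentation onto $M$ or $K$. Minimality is immediate, since every arrow in both resolutions is left-multiplication by a path of positive length and therefore lies in the radical. The main obstacle is the bookkeeping at the staircase boundaries of the double complexes, and, for the $K$-resolution, at the junction where the $P_N$-tail attaches to the double complex via the $a$ and $f$ arrows; at each such boundary module one must carefully match every standard monomial with the correct summand of the homotopy. Once these finitely many modules are tabulated from \eqref{eqn:hx} and \eqref{eqn:hdash}, however, the verification reduces to a routine case-by-case inspection.
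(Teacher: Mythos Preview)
Your outline is correct and follows essentially the same route as the paper's proof. Both arguments organize the verification of $\partial h + h\partial = \Id$ around the square decomposition of the double complex: Lemma~\ref{lemma:bcd} supplies the identity along each column (or row, in the transposed region); Lemma~\ref{Y-OK} kills the off-diagonal terms coming from the adjacent square $Y$; and Lemma~\ref{lemma:hdash} governs the anti-diagonal and the left-edge modules where the extra $h'_X$ component enters. The paper merely makes explicit the handful of boundary checks you defer to ``routine case-by-case inspection'': for instance, at the $A'$/$(A')^T$ junction it records $h_c\,d + aeb\,h'_A = 0$ and $h'_A\,d(1_M)=1_M$, and at the $P_N$ edge of the $K$-resolution it verifies $f h_f + h'_D a = \Id$ and $eb\,h_{eb} + h_f f = \Id$ directly. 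Your invocation of characteristic two to dispense with sign issues in the total complex and in the Y-OK cancellation is the point the paper uses tacitly.
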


\begin{proof}
The double complexes have the structure
\begin{xalignat*}{2}
M & \colon \begin{array}{llllll}
\ddots & \ddots & \ddots & \ddots & \ddots & \vdots \\
\ddots & B   & C   & A   & B   & C \\
\ddots & A'   & B   & C   & A   & B \\
\ddots & (A')^T & A'   & B   & C   & A \\
\ddots & B^T & (A')^T & A'   & B   & C \\
\ddots & C^T & B^T & (A')^T & A'   & B \\
\cdots\vphantom{\ddots}  & A^T & C^T & B^T & (A')^T & A'
\end{array}
&
K & \colon
\begin{array}{llllll}
\ddots & \ddots & \ddots & \ddots & \ddots & \vdots \\
& E & A & B & C & A \\
& D' & C' & A & B & C \\
& & & E & A & B \\
& & & D' & C' & A \\
& & & & & E \\
& & & & & D'
\end{array}
\end{xalignat*}
where the dashes only pertain to the construction of the homotopy.

We now construct the homotopy~$h$. Every term in the double complexes occurs as~$Q$ in a square $\begin{array}{c}
\begin{tikzpicture}[x=1cm,y=1cm]
  \matrix [matrix of math nodes,column sep=2mm,row sep=2mm]
  {
  |(A)| R & & |(B)| P \\ & |(X)| X & \\ |(C)| S & & |(D)| Q \\
};
\begin{scope}[every node/.style={midway,auto,font=\scriptsize}]
\draw[->] (A) -- node {$\alpha$} (B.west|-A); 
\draw[->] (A) -- node[swap] {$y$} (C.north-|A); 
\draw[->] (B) -- node {$x$} (D.north-|B); 
\draw[->] (C) -- node[swap] {$\beta$} (D.west|-C);
\end{scope}
\end{tikzpicture}\end{array}
$, except for~$S$ in $X=D,E$. So it suffices to define $h \vert_Q$ for each~$X$, together with $h\vert_S$ for $X = D,E$. For $X = A,B,C,D,E$ we define $h\vert_Q$ by
\[
h\vert_Q = \begin{cases} h_x & \text{in $X$} \\ h_x + h'_X & \text{in $X'$} \end{cases} \, .
\]
For $X^T$ we construct $h\vert_Q$ and then transpose. For $X = D,E$ we set $h \vert_S = h_y$.

To verify the homotopy property at $Q$ we need to consider the surrounding squares $\begin{array}{|c|c|}\hline\strut X & Y \\ \hline Z & \multicolumn{1}{c}{} \\ \cline{1-1} \end{array}$. Reserving the case $X=D'$ for later, it follows that $Y$ is not dashed, so Lemma~\ref{Y-OK} allows us to ignore it.

For $(X,Z) = (B,A)$, $(C,B)$, $(A,C)$, $(A,E)$ and $(E,D)$ the homotopy property is satisfied by Lemma~\ref{lemma:bcd} if $X$ is not dashed: Lemma~\ref{lemma:hdash}\,\enref{hdash-1} deals with the case where $Z$ is dashed. For $X=A'$ we have $(ch_c + dh'_A)(w) = \begin{cases} 0 & w = 1_M \\ w & \text{otherwise} \end{cases}$ by Lemma~\ref{lemma:hdash}\,\enref{hdash-2}\@. That covers the bottom right $A'$ in the resolution of~$M$, and for $(X,Z) = (A',(A')^T)$ we just need to add $h'_A d + (h_c d + aeb h'_A)$: but $h_c d + aeb h'_A = 0$ by inspection, whereas $h'_A d$ maps $1_M$ to itself and kills all other standard monomials.

For $X = C'$ we need $d h_d + cae h'_C + h_{eb}eb = \Id$. Now, $d h_d$ maps $d, caeb=d^2$ to themselves and kills everything else; $h_{eb} eb$ is the identity on $1_M, c, ca$ and zero elsewhere; and $cae h'_C$ preserves $cae$ and annihilates everything else. So the sum is the identity.

For $X = D'$ we have $(a h'_A + b h_B)(w) = \begin{cases} 0 & w = 1_K \\ w & \text{otherwise} \end{cases}$ by Lemma~\ref{lemma:hdash}\,\enref{hdash-2}\@. So the resolution of~$K$ is exact in degree zero. In higher degrees we can ignore the maps to the top right hand corner of $Y = C'$ by Lemma~\ref{Y-OK}, but we do need to add $h'_C cae$. This preserves $1_K$ and kills everything else, so $a h'_A + b h_B + h'_C cae = \Id$.

For $S = P_N$ in $D'$ we have $f h_f + h'_D a = \Id$ by inspection, similarly for $S = P_N$ in $E$ we have $eb h_{eb} + h_f f = \Id$. In both cases the maps to the top right hand corner of the square cancel each other out, by Lemma~\ref{Y-OK}\@.
\end{proof}

\section{The Ext algebra}
We recall how Ext groups may be read off from the minimal resolution. Let $A$ be a finite dimensional quotient of a algebra over the field $k$, and let $S,T$ be two vertex simples. If $Q_* \rightarrow S$ a minimal projective resolution, then minimality implies that every differential $\Hom_A(Q_n, T) \rightarrow \Hom_A(Q_{n+1},T)$ vanishes, and hence $\Ext^n(S,T) \cong \Hom_A(Q_n,T)$. Moreover, $Q_n$ is a direct sum of projective indecomposable modules, each of which corresponds to a vertex simple; and $\Hom_A(Q_n,T)$ is $k^r$, where  $r$ is the number of copies of $P_T$ in $Q_n$.

For example, we see from \eqref{eqn:minresM} that the minimal resolution $Q_*$ of $M$ has $Q_3 \cong P_M^3 \oplus P_K$. Hence $\Ext^3(M,M)$ is three-dimensional; $\Ext^3(M,K)$ is one-dimensional; and $\Ext^3(M,M) = 0$. Each of the three copies of $P_M$ in $Q_3$ corresponds to one basis vector of $\Ext^3(M,M)$.
\medskip

\noindent
For $L = K,M,N$ write $D(L)_*$ for the minimal resolution of~$L$\@. For $L=M,K$ we have $D(L)_n = \bigoplus_{r+s=n} D(L)_{rs}$, where $r$ is the horizontal degree and $s$ the vertical degree.

\begin{proposition}
\label{prop:ExtM11-gens}
The Ext algebra is generated by the following eight classes:
\begin{xalignat*}{2}
\alpha & \colon D(K)_{10} = P_N \twoheadrightarrow N \in \Ext^1(K,N)
&
\beta & \colon D(K)_{01} = P_M \twoheadrightarrow M \in \Ext^1(K,M)
\\
\gamma & \colon D(M)_{01} = P_K \twoheadrightarrow K \in \Ext^1(M,K)
&
\delta & \colon D(M)_{10} = P_M \twoheadrightarrow M \in \Ext^1(M,M)
\\
\eps & \colon D(N)_1 = P_{KN} \stackrel{\pi_1}{\twoheadrightarrow} K \in \Ext^1(N,K)
&
\phi & \colon D(N)_1 = P_{KN} \stackrel{\pi_2}{\twoheadrightarrow} N \in \Ext^1(N,N)
\\
\kappa & \colon D(K)_{22} = P_K \twoheadrightarrow K \in \Ext^4(K,K)
&
\nu & \colon D(N)_4 = P_N \twoheadrightarrow N \in \Ext^4(N,N)
\end{xalignat*}
\end{proposition}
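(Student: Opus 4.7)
The plan is to read off every $\Ext^n(S,T)$ from the minimal resolutions constructed in Proposition~\ref{prop:minres} and then to express each basis element as a Yoneda product of the eight proposed generators. A basis for $\Ext^n(S,T)$ consists of the projections $D(S)_n \twoheadrightarrow T$, one for each indecomposable $P_T$-summand of $D(S)_n$, and these summands can be enumerated from the period-four resolution (for $S = N$) or from the second-quadrant double complexes (for $S = K, M$) in \eqref{eqn:minresM} and \eqref{eqn:minresK}; the structural regularity of Proposition~\ref{prop:minres}, organised via the square types $A$, $B$, $C$, $D$, $E$, extends the enumeration to all $n \ge 0$.

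The next step is to lift each generator to a chain map, so that Yoneda products can be computed at the chain level. The double-complex structure suggests natural candidates: $\delta$ lifts to the horizontal shift $D(M)_{r+1,s}\to D(M)_{r,s}$; $\gamma$ and $\beta$ lift to vertical shifts linking $D(M)$ and $D(K)$; $\alpha$, $\eps$, $\phi$ lift to the natural degree-one shifts in the period-four $D(N)$; and the periodicity classes $\kappa\in\Ext^4(K,K)$ and $\nu\in\Ext^4(N,N)$ lift respectively to a diagonal $(2,2)$-shift in $D(K)$ and to the full period-four shift in $D(N)$. The contracting homotopies $h_x$ and $h'_X$ from Lemmas~\ref{lemma:bcd} and~\ref{lemma:hdash} supply the raw material for writing these lifts down explicitly.

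The final step is an induction on $n$ that every basis vector is a Yoneda product of the eight generators. A basis vector at position $(r,s)$ in $D(M)_n$ is reached from the origin by $r$ horizontal shifts combined with vertical and cross-resolution shifts, with $\kappa^k$ absorbing the diagonal periodicity once $r+s$ is large; for $\Ext^*(N, -)$ the period-four structure reduces every class to a product of $\nu^k$ with something of degree at most three, which $\alpha,\eps,\phi$ cover; the $K$-resolution is treated symmetrically to the $M$-resolution via $\beta,\gamma,\kappa$.

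The main obstacle is the chain-level bookkeeping in the third step. The double complexes interleave $P_K$, $P_M$, and $P_N$ summands in a pattern that varies along each antidiagonal, and one must verify square-type by square-type that the eight lifts from the second step really do compose to produce each projection onto the target simple --- without this, it is not clear that the candidate generators are not missing a hidden direction in some $\Ext^n(S,T)$. Once that verification is done, generation follows immediately by comparison with the Hilbert-series count from the first step.
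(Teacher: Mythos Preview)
Your plan is essentially the paper's own argument: lift the eight classes to explicit chain maps and then check that every indecomposable summand of each $D(S)_n$ is hit by a Yoneda product ending in one of them, organised as a left-ideal computation.  Two small corrections to your outline: the lift of $\delta$ is not the pure horizontal shift $D(M)_{r+1,s}\to D(M)_{r,s}$ but rather $D(M)_{r+1,s}\xrightarrow{\Id} D(M)_{s,r}$, a shift followed by a transpose (the double complex for~$M$ carries a transpose symmetry about the diagonal, and without the flip this is not a chain map); and the contracting homotopies $h_x$, $h'_X$ play no role in building the lifts --- they were used only to establish exactness of the resolutions, while the eight chain maps are written down directly from the square patterns.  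The Hilbert-series comparison you invoke at the end is likewise not part of this proof; the paper reserves that count for verifying completeness of the \emph{relations}, not the generators.
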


\noindent
We of course view the idempotents in $\Ext^0(K,K)$, $\Ext^0(M,M)$ and $\Ext^0(N,N)$ as part of the structure.

\begin{proof}
Each projective indecomposable module in each of the three minimal resolutions corresponds to a basis vector of an Ext group. We need to show that every basis vector (except in degree zero) factors through one of the above eight classes. As we are dealing with the Yoneda product, we lift each of the eight classes to a chain map.
\begin{itemize}
\item
$D(M)_{{*}+1} \xrightarrow{\delta_*} D(M)_*$ shifts one column to the right (killing the right hand column in the process) and then transposes. So it is the identity map $D(M)_{r+1,s} \rightarrow D(M)_{s,r}$. Hence the class corresponding to each projective indecomposable in $D(M)_{{}>0,{*}}$ lies in the left ideal generated by~$\delta$\@. Observe that $\delta^2_*$ is the identity map $D(M)_{r+1,s+1} \rightarrow D(M)_{r,s}$.
\item
$D(M)_{{*}+1} \xrightarrow{\gamma_*} D(K)_*$ kills $D(M)_{r,s+1}$ for $s < r-1$ and shifts one row down.  For $s \geq r+1$ it is the identity map $D(M)_{r,s+1} \rightarrow D(K)_{r,s}$. The remaining cases:
\begin{align*}
D(M)_{2s,2s+1} = P_K & \xrightarrow{\Id} P_K = D(K)_{2s,2s}
\\
D(M)_{2s+1,2s+1} = P_M & \xrightarrow{eb} P_N = D(K)_{2s+1,2s}
\\
D(M)_{2s,2s} = P_M & \xrightarrow{0} 0 = P(K)_{2s,2s-1}
\\
D(M)_{2s-1,2s} = P_K & \xrightarrow{e} P_N = D(K)_{2s-1,2s-1}
\end{align*}

\noindent
So the left ideal generated by~$\gamma$ contains the classes corresponding to each projective indecomposable in $D(M)_{r,s}$ for $s \geq r+2$, and in $D(M)_{2s,2s+1}$. So the left ideal generated by $\gamma$~and $\delta$ is the whole of $\Ext^{{}>0}(M,\_)$.
\item
$D(K)_{{*}+1} \xrightarrow{\alpha_*} D(N)_*$ kills the right-hand edge of $D(K)_{**}$ and is zero except on the left-hand edge

\begin{small}
\[
\begin{CD}
\vdots \\ @VV{f}V 
\\ P_N @>{a}>> P_K @>{cae}>> P_M
\\ &&&& @VV{eb}V
\\ &&&& P_N
\\ &&&& @VV{f}V
\\ &&&& P_N
\end{CD}
\]
\end{small}
On this quotient it is given by
\[
\begin{CD}
\cdots @>{f}>> P_N @>{a}>> P_K @>{cae}>> P_M @>{eb}>> P_N @>{f}>> P_N
\\ && @V{\Id}VV @V{ebc}VV @V{\begin{textmatrix}b\\0\end{textmatrix}}VV @V{\begin{textmatrix}0\\\Id\end{textmatrix}}VV @V{\Id}VV
\\ \cdots @>{\begin{textmatrix}e&f\end{textmatrix}}>> P_N @>{f^4}>> P_N @>{\begin{textmatrix}a\\f\end{textmatrix}}>> P_{KN} @>{\begin{textmatrix}bc&a\\e&f^2\end{textmatrix}}>> P_{KN} @>{\begin{textmatrix}e&f\end{textmatrix}}>> P_N 
\end{CD}
\]
So all classes coming from $D(K)_{1+2n,2n}$ and $D(K)_{1+2n,1+2n}$ lie in the left ideal generated by~$\alpha$\@.
\item
$D(K)_{{*}+1} \xrightarrow{\beta_*} D(M)_*$ shifts one row down, transposes, and then maps to $D(M)_*$. For $s \geq r$ it is the identity map $D(K)_{r,s+1} \rightarrow D(M)_{s,r}$. The remaining cases:
\begin{align*}
D(K)_{2s+1,2s+1} = P_N & \xrightarrow{a} P_K = D(M)_{2s,2s+1}
\\
D(K)_{2s,2s} = P_K & \xrightarrow{ae} P_K = D(M)_{2s-1,2s}
\\
D(K)_{2s+1,2s} = P_N & \xrightarrow{0} P_M = D(M)_{2s-1,2s+1} \, .
\end{align*}
All classes coming from the region $s > r$ of $D(K)_{r,s}$ lie in the left ideal generated by~$\beta$\@.
\item
$\kappa_*$ is $D(K)_{r+2,s+2} \xrightarrow{\Id} D(K)_{r,s}$ for all~$r,s$\@. The left ideal generated by~$\kappa$ contains all classes from $D(K)_{**}$, except those coming from the first two columns.
So $\alpha$, $\beta$~and $\kappa$ generate $\Ext^{{}>0}(K,\_)$ as a left ideal.
\item
For $D(N)_{{*}+1} \xrightarrow{\eps_*} D(K)_*$ we define $E_*$ to be the (slightly thickened) left hand edge of $D(K)_*$, the periodic complex assembled from copies of
\[
\begin{CD}
\\ P_K @>{cae}>> P_M
\\ && @VV{eb}V
\\ && P_N @>{ca}>> P_M
\\ && @VV{f}V @VV{b}V
\\ && P_N @>{a}>> P_K
\end{CD}
\]
with the bottom right hand $P_K$ in bidegree $(2s,2s)$ ($s \geq 0$)\@. Then $E_*$ is a quotient of the complex $D(K)_*$, and there is a chain map $D(N)_{{*}+1} \rightarrow E_*$ as follows:
\[
\begin{CD}
\cdots @>{\begin{textmatrix}bc&a\\e&f^2\end{textmatrix}}>> P_{KN} @>{\begin{textmatrix}e&f\end{textmatrix}}>> P_N @>{f^4}>> P_N @>{\begin{textmatrix}a\\f\end{textmatrix}}>> P_{KN} @>{\begin{textmatrix}bc&a\\e&f^2\end{textmatrix}}>> P_{KN}
\\ && @V{\begin{textmatrix}\Id&0\end{textmatrix}}VV @V{ca}VV @V{\Id}VV @V{\begin{textmatrix}c&0\\0&\Id\end{textmatrix}}VV @V{\begin{textmatrix}\Id&0\end{textmatrix}}VV
\\ \cdots @>{\begin{textmatrix}b&a\end{textmatrix}}>> P_K @>{cae}>> P_M @>{eb}>> P_N @>{\begin{textmatrix}ca\\f\end{textmatrix}}>> P_{MN} @>{\begin{textmatrix}b&a\end{textmatrix}}>> P_K 
\end{CD}
\]
Now, the only maps out of $E_*$ into the rest of $D(K)_*$ is a $P_M \xrightarrow{d} P_M$ emanating horizontally from each~$P_M$, so since $dc=0$ the chain map lifts unaltered to $D(N)_{{*}+1} \xrightarrow{\eps_*} D(K)_*$.
\item
$D(N)_{{*}+1} \xrightarrow{\phi_*} D(N)_*$ is
\[
\begin{CD}
\cdots @>{\begin{textmatrix}bc&a\\e&f^2\end{textmatrix}}>> P_{KN} @>{\begin{textmatrix}e&f\end{textmatrix}}>> P_N @>{f^4}>> P_N @>{\begin{textmatrix}a\\f\end{textmatrix}}>> P_{KN} @>{\begin{textmatrix}bc&a\\e&f^2\end{textmatrix}}>> P_{KN}
\\ && @V{\begin{textmatrix}0&\Id\end{textmatrix}}VV @V{f^3}VV @V{\begin{textmatrix}0\\\Id\end{textmatrix}}VV @V{\begin{textmatrix}\Id&0\\0&f\end{textmatrix}}VV @V{\begin{textmatrix}0&\Id\end{textmatrix}}VV
\\ \cdots @>{\begin{textmatrix}e&f\end{textmatrix}}>> P_N @>{f^4}>> P_N @>{\begin{textmatrix}a\\f\end{textmatrix}}>> P_{KN} @>{\begin{textmatrix}bc&a\\e&f^2\end{textmatrix}}>> P_{KN} @>{\begin{textmatrix}e&f\end{textmatrix}}>> P_N
\end{CD}
\]
\item
$\nu_*$ is $D(N)_{r+4} \xrightarrow{\Id} D(N)_r$ for all $r$\@.
\end{itemize}
So the left ideal generated by $\eps$, $\phi$~and $\nu$ is the whole of $\Ext^{{}>0}(N,\_)$.
\end{proof}

\begin{proposition}
\label{prop:ExtM11-rels}
The relations are as described in Theorem~\ref{thm:M11p2b1}\@.
\end{proposition}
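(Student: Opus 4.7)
The plan is to verify the three assertions -- that the $14$ relations hold, that they are minimal, and that they form a Gr\"obner basis -- by working with the explicit chain-map representatives of the eight generators constructed in Proposition~\ref{prop:ExtM11-gens}. For each relation $u = v$, the Yoneda composition is computed at the chain level; the relation then holds if and only if $u_* - v_*$ is null-homotopic as a chain map between the minimal resolutions. In most cases the composition is literally $0$ or literally equal to the right-hand side, thanks to the small combinatorics of the quiver. When a genuine null-homotopy is required, it is assembled from the contracting homotopies $h_x$ and $h'_X$ of Proposition~\ref{prop:minres}.

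The verification of the relations proceeds case by case. The four vanishing relations $\alpha\gamma = 0$, $\gamma\beta = 0$, $\eps\alpha = 0$, $\beta\eps = 0$ each follow immediately by composing two of the degree-one chain maps and observing that the resulting map into the target resolution lands in a component that is annihilated by the basic-algebra relations $bc = 0$ or $cb = 0$ or $bd = 0$ or $dc = 0$. The relation $\phi^2 = 0$ reduces, in the relevant bidegree, to the occurrence of $f^6$, which vanishes because $f^5 = 0$; similarly $\gamma\delta^2\beta = 0$ and $\eps\phi\alpha = 0$ reduce to one of $dc = 0$, $cb = 0$ or $bcaeb = 0$. The commutator-style relations $\phi\alpha\eps = \alpha\eps\phi$, $\delta^2\beta\gamma = \beta\gamma\delta^2$, $\kappa\gamma = \gamma\delta^4$, $\delta^4\beta = \beta\kappa$, $\nu\alpha = \alpha\kappa$, $\kappa\eps = \eps\nu$ and $\nu\phi = \phi\nu$ are handled by computing both Yoneda compositions and matching them term by term in each bidegree; the fact that $\kappa_*$ and $\nu_*$ are identity shifts (and $\delta_*^2$ is the identity map on the double complex of $M$ shifted by $(-1,-1)$) keeps the bookkeeping light, and the single nontrivial identity $aebc = bcae$ from the basic algebra is exactly what is needed to equate the two sides of $\delta^2\beta\gamma = \beta\gamma\delta^2$.

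Minimality is read off from the minimal resolutions. For each relation $u = v$ one exhibits a triple $(S,T,n)$ such that removing the relation would force $\dim\Ext^n(S,T)$ to exceed the value computed from the PIM decomposition of $D(S)_n$, as recalled at the start of the section. Concretely I would tabulate the Poincar\'e series of each $\Ext^*(S,T)$ from the three resolutions and compare with the Hilbert series of the path algebra on the eight generators modulo the $14$ relations; the degree-by-degree match then gives both the completeness of the presentation and the minimality of each relation.

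The Gr\"obner basis property is checked by enumerating overlaps of the leading monomials $\alpha\gamma$, $\gamma\beta$, $\eps\alpha$, $\beta\eps$, $\phi^2$, $\eps\phi\alpha$, $\phi\alpha\eps$, $\delta^2\beta\gamma$, $\gamma\delta^2\beta$, $\kappa\gamma$, $\delta^4\beta$, $\nu\alpha$, $\kappa\eps$, $\nu\phi$ under the ordering $\alpha < \beta < \gamma < \delta < \eps < \phi < \kappa < \nu$ and confirming that every $S$-polynomial reduces to zero. The admissible overlaps are restricted by the three-idempotent quiver structure: only monomials respecting the source/target pattern of the $\Ext^1$-generators can overlap, so the overlap list is short (for instance $\kappa\gamma$ overlaps $\gamma\delta^4$ via $\gamma$, producing an $S$-polynomial that reduces using $\delta^4\beta = \beta\kappa$ and the other $\kappa$/$\nu$-commutation relations). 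I expect this Gr\"obner verification to be the main obstacle, as it is the only part of the argument without a conceptual shortcut, whereas the relations themselves are almost all forced by a single identity in the basic algebra and minimality is pure dimension counting.
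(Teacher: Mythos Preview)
Your overall strategy matches the paper's---verify each relation at the chain level, then establish completeness by a dimension count---but several of your stated mechanisms are wrong, and your arguments for the Gr\"obner property and for minimality are interchanged relative to what actually works.

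On the relations: the four degree-two vanishings $\alpha\gamma = \gamma\beta = \eps\alpha = \beta\eps = 0$ do not come from basic-algebra relations (indeed $bc$ is a standard monomial, not zero); the paper simply observes that the target groups $\Ext^2(M,N)$, $\Ext^2(K,K)$, $\Ext^2(N,M)$ vanish. The relation $\phi^2 = 0$ has nothing to do with $f^6$: one computes $\phi_0\phi_1 = \begin{textmatrix}0 & f\end{textmatrix}$, which lands in the radical of $P_N$ and hence represents zero in $\Ext^2(N,N)$. The vanishings $\eps\phi\alpha = 0$ and $\gamma\delta^2\beta = 0$ are of the same type. For $\delta^2\beta\gamma = \beta\gamma\delta^2$ the paper's computation uses only the identity-shift description of $\delta^2_*$ on the double complex; the basic-algebra relation $aebc = bcae$ is not invoked.

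On completeness and the Gr\"obner property: your Hilbert-series comparison is exactly the paper's argument, and once the standard monomials (with respect to the stated leading terms) have the correct Poincar\'e series one gets the Gr\"obner basis property for free. No Buchberger verification is needed, so the part you flag as the ``main obstacle'' disappears.

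On minimality: the Hilbert-series match does \emph{not} yield minimality, since deleting a redundant relation leaves the quotient unchanged. The paper's argument is the opposite of what you outline: it asks whether, after deleting any one relation, the Buchberger algorithm on the remaining thirteen could regenerate it, and then checks that the handful of nontrivial $S$-polynomials all lie in degree $\geq 4$ and reduce to zero using only strictly lower-degree relations---so no relation can be recovered from the others.
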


\begin{proof}
As the groups $\Ext^2(M,N)$, $\Ext^2(K,K)$ and $\Ext^2(N,M)$ are all zero, the products $\alpha \gamma$, $\gamma \beta$, $\eps \alpha$ and $\beta \eps$ vanish. Since $\phi_0 \circ \phi_1 \colon P_{KN} \rightarrow P_N$ is $\begin{textmatrix}0 & \Id\end{textmatrix} \begin{textmatrix}
\Id&0\\0&f\end{textmatrix} = \begin{textmatrix}
0 & f
\end{textmatrix}$ with image in the radical, $\phi^2 =0$. Similarly, $\phi_1 \alpha_2 = \begin{textmatrix}\Id&0\\0&f\end{textmatrix} \begin{textmatrix}b\\0\end{textmatrix}$ takes values in the radical, so $\eps \phi \alpha = 0$.

We have $\phi_0 \alpha_1\eps_2 = \begin{textmatrix}0 & \Id\end{textmatrix} \begin{textmatrix}0 \\ \Id\end{textmatrix} \Id = \Id_{P_N}$ and $\alpha_0 \eps_1 \phi_2 = \begin{textmatrix}0&\Id\end{textmatrix} \begin{textmatrix}c & 0 \\ 0 & \Id \end{textmatrix} \begin{textmatrix} 0 \\ \Id \end{textmatrix} = \Id$, so $\alpha \eps \phi = \phi \alpha \eps$.

$\beta_0 \gamma_1$ is $\begin{CD} P_{MMK} @>{\begin{textmatrix}\Id&0&0\\0&eb&0\end{textmatrix}}>> P_{MN} @>{\begin{textmatrix}\Id&0\end{textmatrix}}>> P_M\end{CD}$, that is $\begin{CD} P_{MMK} @>{\begin{textmatrix}\Id&0&0\end{textmatrix}}>> P_M\end{CD}$; and
$\beta_2 \gamma_3$ is $\begin{CD} P_{KMMKM} @>{\begin{textmatrix}\Id&0&0&0&0\\0&\Id&0&0&0\end{textmatrix}}>> P_{KM} @>{\begin{textmatrix}0&0\\0&\Id\\\Id&0\end{textmatrix}}>> P_{MMK} \end{CD}$. So since each lift of $\delta^2$ acts by discarding the first and last entries, we have $\delta^2 \beta \gamma = \beta \gamma \delta^2$.

$\beta_3$ is $\begin{CD} P_{MMK} @>{\begin{textmatrix}0&0&0\\0&0&ae\\0&\Id&0\\\Id&0&0\end{textmatrix}}>> P_{MKMM} \end{CD}$, so $(\delta^2 \beta)_1$ is  $\begin{CD} P_{MMK} @>{\begin{textmatrix}0&0&ae\\0&\Id&0\end{textmatrix}}>> P_{KM} \end{CD}$. So since $\gamma_0$ is projection onto the first summand $P_K$, the image lies in the radical and $\gamma \delta^2 \beta = 0$.

For each module $L = M,N,K$ there is a class in $\Ext^4(L,L)$ which describes the self-similarity of the minimal resolution: the periodicity class $\nu \in \Ext^4(N,N)$ as well as the classes $\delta^4 \in \Ext^4(M,M)$ and $\kappa \in \Ext^4(K,K)$. The remaining five relations describe how the six degree one generators commute with these degree four classes: the sixth relation is the fact that $\delta$ commutes with $\delta^4$, but that is trivial. All five relations are demonstrated in the same way, so we just consider $\kappa \gamma = \gamma \delta^4$. The map $\gamma_4$ is
$\begin{CD} P_{MMKMMK} @>{\begin{textmatrix}\Id&0&0&0&0&0\\0&\Id&0&0&0&0\\0&0&\Id&0&0&0\end{textmatrix}}>> P_{MMK} \end{CD}$, so $\kappa \gamma$ picks out the third summand of $P_{MMKMMK}$. And $\gamma \delta^4$ first discards the first, second, fifth and sixth summands before selecting the first summand of $P_{KM}$. So $\kappa \gamma = \gamma \delta^4$.

That establishes the relations. Now to show that they suffice. Observe that the leading term of each relation agrees with the lexicographic ordering $\alpha < \beta < \gamma < \delta < \eps < \phi < \kappa < \nu$. So as we have already established that the eight classes generate the Ext algebra, it only remains to check that the standard monomials with respect to these relations yield the same Hilbert-Poincar{\'e} series as the three minimal resolutions. From the resolutions we see that $\Ext^*(N,\_)$ has Hilbert-Poincar{\'e} series $\frac{1+2t+2t^2+t^3}{1-t^4}$; $\Ext^*(M,\_)$ has $\frac{1}{(1-t)^2}$; and $\Ext^*(K,\_)$ has $\frac{1+t}{(1-t)(1-t^4)}$. The standard monomials for $\Ext^*(N,\_)$ are $\nu^r$, $\eps \nu^r$, $\phi \nu^r$, $\alpha \eps \nu^r$, $\eps \phi \nu^r$, $\alpha \eps \phi \nu^r$ for $r \geq 0$, with Hilbert-Poincar{\'e} series $\frac{1+2t+2t^2+t^3}{1-t^4}$. The standard monomials for $\Ext^*(M,\_)$ are $w \delta^r$ for $r \geq 0$ and $w$ a right divisor of some $(\delta \beta \gamma)^n$: so the Hilbert-Poincar{\'e} series is $\frac{1}{(1-t)(1-t^4)}$. Finally the standard monomials for $\Ext^*(K,\_)$ are $w' \kappa^r$, $\alpha \kappa^r$, $\phi \alpha \kappa^r$, $w'' \delta^2 \beta \kappa^r$ for $w'$ a right divisor of some $(\gamma \delta \beta)^n$ and $w''$ a right divisor of some $(\beta \gamma \delta)^n$: so the Hilbert-Poincar{\'e} series is
\[
\frac{1+(t+t^2)(1-t)+t^3}{(1-t)(1-t^4)} = \frac{1+t}{(1-t)(1-t^4)} \, .
\]
This also proves that the relations of Proposition~\ref{prop:ExtM11-rels} are a \Gro. basis with respect to the stated lexicographic ordering.

If the relations were not minimal, then we could delete one and subsequently recover it using the 
Buchberger algorithm. But when one does this, most $S$-polynomials are identically zero: the exceptions are $\alpha \eps \phi \alpha$, $\phi \alpha \eps \phi$, $\alpha \eps \phi^2 \alpha$, $\beta \gamma \delta^2 \beta$, $\gamma \beta \gamma \delta^2$ and further relations in $\Ext^{{}\geq 6}$. The first two cannot help recovering a lost relation, as they have degree four and reduce to zero over the lower degree relations $\eps \phi \alpha = 0$, $\phi \alpha \eps = \alpha \eps \phi$ and $\phi^2 = 0$. Similarily, the next three have degree five but reduce to zero over $\gamma \beta = 0$, $\phi^2 = 0$ and $\gamma \delta^2 \beta = 0$. Finally, there is no relation in degree${} \geq 6$ that we could have deleted.
\end{proof}

\section*{Acknowledgements}
\noindent
This work was supported by the German Science
Foundation (DFG), project number GR 1585/6-1\@. We thank Klaus Lux for drawing our attention to Generalov's work.


\begin{thebibliography}{1}

\bibitem{Benson:I}
D.~J. Benson.
\newblock {\em Representations and Cohomology. {I}}.
\newblock Cambridge Studies in Advanced Math., vol.~30. Cambridge University
  Press, Cambridge, second edition, 1998.

\bibitem{BensonCarlson:multiple}
D.~J. Benson and J.~F. Carlson.
\newblock Complexity and multiple complexes.
\newblock {\em Math. Z.}, 195(2):221--238, 1987.

\bibitem{BensonCarlson:Diagrammatic}
D.~J. Benson and J.~F. Carlson.
\newblock Diagrammatic methods for modular representations and cohomology.
\newblock {\em Comm. Algebra}, 15(1-2):53--121, 1987.

\bibitem{CaGrSchn:Ext}
J.~F. Carlson, E.~L. Green, and G.~J.~A. Schneider.
\newblock Computing {E}xt algebras for finite groups.
\newblock {\em J. Symbolic Comput.}, 24(3-4):317--325, 1997.

\bibitem{Erdmann:habil}
K.~Erdmann.
\newblock {\em Blocks of tame representation type and related algebras}, volume
  1428 of {\em Lecture Notes in Mathematics}.
\newblock Springer-Verlag, Berlin, 1990.

\bibitem{GAP4}
The GAP~Group.
\newblock {\em {GAP -- Groups, Algorithms, and Programming, Version 4.6.5}},
  2013.
\\ (\url{http://www.gap-system.org}).

\bibitem{Generalov:M11}
A.~I. Generalov.
\newblock Cohomology of algebras of semidihedral type. {I}.
\newblock {\em Algebra i Analiz}, 13(4):54--85, 2001.
\newblock English translation: \emph{St. Petersburg Math. J.}, 13(4):549--573, 2002.

\bibitem{HoffmanT:thesis}
T.~R. Hoffman.
\newblock {\em Constructing Basic Algebras for the Principal Block of Sporadic
  Simple Groups}.
\newblock PhD thesis, University of Arizona, 2004.

\bibitem{HoffmanT:website}
T.~R. Hoffman.
\newblock {\em Basic Algebras for Finite Groups}.
\newblock Coastal Carolina University, 2005.
\newblock \\ \url{http://ww2.coastal.edu/thoffman/research/basalg.html}

\bibitem{Lux:habil}
K.~M. Lux.
\newblock Algorithmic methods in modular representation theory.
\newblock Habilitationsschrift, Mathematisch-Naturwissenschaftl.\@
  Fakult{\"a}t, RWTH Aachen Univ.\@, Germany, 1997.

\bibitem{PawloskiR:thesis}
R.~M. Pawloski.
\newblock {\em Computing the Cohomology Ring and {E}xt-Algebra of Group
  Algebras}.
\newblock PhD thesis, University of Arizona, 2006.

\end{thebibliography}

\end{document}